\newcommand {\Z}	  {\mathbb{Z}}
\newcommand {\Q}	  {\mathbb{Q}}
\newcommand {\R}	  {\mathbb{R}}
\newcommand {\st}	  {\text{s.t.}}
\newcommand{\deltabound}{\delta}
\newcommand{\entrybound}{C}
\newcommand{\rank}{\mathrm{rank}}
\newcommand{\maxsubdet}{\deltabound_{\max}}
\renewcommand{\epsilon}{\varepsilon}
\renewcommand{\leq}{\leqslant}
\renewcommand{\geq}{\geqslant}
\theoremstyle{plain}
\newtheorem{theorem}{Theorem}
\newtheorem{lemma}[theorem]{Lemma}
\newtheorem{remark}[theorem]{Remark}
\title{\bf A Note on Non-Degenerate Integer Programs with Small Sub-Determinants}
\author[1]{S.~Artmann}
\author[2]{F.~Eisenbrand}
\author[1]{C.~Glanzer}
\author[3]{T.~Oertel}
\author[4]{S.~Vempala}
\author[1]{R.~Weismantel}
\affil[1]{Swiss Federal Institute of Technology, Z\"urich (ETH Z\"urich)}
\affil[2]{\'Ecole Polytechnique F\'ed\'erale de Lausanne (EPFL)}
\affil[3]{Cardiff University}
\affil[4]{Georgia Institute of Technology}
\begin{document}


\maketitle
%
%

\begin{abstract}
\noindent The intention of this note is two-fold. First, we study integer optimization problems in standard form defined by $A \in\Z^{m\times{}n}$ and present an algorithm to solve such problems in polynomial-time provided that both the largest absolute value of an entry in $A$ and $m$ are constant. Then, this is applied to solve integer programs in inequality form in polynomial-time, where the absolute values of all maximal sub-determinants of $A$ lie between $1$ and a constant.
\end{abstract}

%

%
	
\section{Introduction}

Let $A \in \Z^{m \times n}$ be a matrix such that all of its entries are bounded in absolute value by an integer $\Delta$. Assume that for each row index $i$, $\gcd(A_{i,\cdot})=1$. We call the determinant of an $(n\times n)$-submatrix of $A$ an $(n\times n)$-sub-determinant of $A$. Let
\begin{align*}
\maxsubdet(A) :=& \max\{|d| \colon d \text{ is an $(n\times n)$-sub-determinant of }A \}.
\end{align*}  We study the complexity of an integer programming problem in terms of the parameter  $\Delta$ when presented in standard form \eqref{eq:1}. Moreover,   we study integer programming problems  in inequality form (\ref{equ:opt_problem}) that are associated with the matrix $A$ whose `complexity' is measured by the parameter $\maxsubdet(A)$.
\begin{align}
\max & \left\{ c^Tx \colon Ax = b, \, x\geq0, \,x \in \Z^n\right\}, \label{eq:1}\\
\max & \left\{c^Tx \colon Ax\le b, x \in \Z^n \right\}.\label{equ:opt_problem}
\end{align} 
It is known that when the absolute value of all sub-determinants of $A$ is bounded by one, $A$ is totally unimodular and the integer programs \eqref{eq:1} and \eqref{equ:opt_problem} are polynomially solvable. This concept of total unimodularity was pioneered by the works of Hoffman, Kruskal, Veinott, Dantzig and many other researchers. It has led to a beautiful and fundamental theory  that is so important that it is  covered by all standard textbooks in combinatorial optimization nowadays. For instance, see \cite{Schrijver86} for a thorough treatment of the subject.

When $\maxsubdet(A) > 1$, then surprisingly little is known.

Bonifas et al.~showed in \cite{Bonifasetal} that for a bounded polyhedron $P=\{x \in \R^n \colon Ax \leq b\}$ its (combinatorial) diameter is bounded in the order of $O(\maxsubdet(A)^2 \cdot n^{3.5} \cdot \log( n \cdot \maxsubdet(A)))$. This improves an important result of Dyer and Frieze \cite{Dyeretal} that applies to TU-matrices.

Veselov and Chirkov (2009) showed in \cite{veselov2009Intprobimmat}, how \eqref{equ:opt_problem} can be solved polynomially in $m$ and $n$  and the encoding size of the data when $\maxsubdet(A)\le 2$ and no $(n \times n)$-submatrices are singular.

There exists  
a dynamic programming approach to solve~\eqref{eq:1} by 
Papadimitriou~\cite{MR677087}, see also~\cite{Schrijver86}, Part IV, Section 18.6: Let $\Delta(A,b)$
be an upper bound on the absolute values of $A$ and
$b$. Then, if~\eqref{eq:1} is feasible and bounded, it has
an optimal solution with  components bounded by $U := (n+1) (m \cdot
\Delta(A,b))^m$.   

The dynamic program is a maximum weight path problem on a properly defined (acyclic)
graph. The optimum
solution can be found in time 
\begin{equation*}
\mathcal{O} ( | V| ) = \mathcal{O}(2^m \cdot n^{m+1}  \cdot (m\cdot\Delta(A,b))^{m^2}\cdot \Delta(A,b)^m). 
\end{equation*}

We show how to avoid a dependence of the running time
on the largest absolute value of an entry in $b$: For fixed $m$, an
integer program can be solved in time polynomially bounded by
$n$ and the largest absolute value $\Delta$ of an entry in
$A$. This result is one important ingredient to solve the optimization problem \eqref{equ:opt_problem} in polynomial-time for any constant values of $\maxsubdet(A)$, provided that $A$ has no singular $(n \times n)$-submatrices  and $\rank(A) = n$.
It turns out that the condition that all $(n\times n)$-sub-determinants shall be non-zero imposes very harsh restrictions on $A$. In particular,  $A$ can have at most $n+1$ rows provided that $n$ exceeds a certain constant.

\section{Dynamic Programming Revisited}
\subsection{The Pure Integer Case}\label{section:pure:integer:case}

We show that one can solve problem~\eqref{eq:1} in time polynomial in $n$, $\Delta$ and $\log(\max_i\{|b_i|\})$ where $\Delta=\max_{i,j}\{|A_{i,j}|\}$.
This is an improvement over Papadimitriou's approach \cite{MR677087}, as we eliminate the unary dependency on $b$. For $S\subseteq\{1,\ldots,n\}$, let $A_S$ denote the matrix stemming from
$A$ by the columns indexed by $S$. 

\begin{lemma}
	\label{lem:1}
	If the integer program~\eqref{eq:1} is feasible and bounded, there exists an optimal solution $x^* \in \Z^n$ where at least $n-m$
	components of   $x^*$ are bounded by $(m+2)\cdot (m\cdot \Delta)^m$.
	Furthermore, the  columns of $A$ corresponding to components of
	$x^*$ that are larger than $(m+2)\cdot (m\cdot \Delta)^m$ are linearly
	independent. 
\end{lemma}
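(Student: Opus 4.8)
The plan is to start from an arbitrary optimal solution $x^*$ of \eqref{eq:1} and to repeatedly decrease a ``large'' component — one exceeding $(m+2)\cdot(m\Delta)^m$ — while staying feasible, integral and optimal, until the columns of $A$ indexed by the large components become linearly independent. Since those columns lie in $\R^m$, at most $m$ of them can be linearly independent, and this yields both assertions of the lemma at once.

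First I would set $B := \{\, j : x^*_j > (m+2)\cdot(m\Delta)^m \,\}$ and assume, towards the stopping condition being violated, that $\{A_j : j \in B\}$ is linearly dependent. Then there is an inclusion-minimal dependent subset (a circuit) $C \subseteq B$, and $|C| \le m+1$ because $\rank(A) \le m$. The matrix $A_C$ has rank $|C|-1$, so its kernel is spanned by a single vector, which after clearing denominators can be taken as an integral vector $z$ whose entries are, up to sign, the $(|C|-1)\times(|C|-1)$ minors of $A_C$ (Cramer's rule). Each such minor has absolute value at most $m!\,\Delta^m \le (m\Delta)^m$ (or $m^{m/2}\Delta^m$ via Hadamard), hence $\|z\|_\infty \le (m\Delta)^m$ and $\mathrm{supp}(z) \subseteq C \subseteq B$.

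Next I would fix the direction of movement. At least one of $z,-z$ has a negative entry (otherwise $z=0$), and I claim one can pick $d\in\{z,-z\}$ that has a negative entry \emph{and} satisfies $c^Td \ge 0$: if both $\pm z$ have a negative entry, take the one with non-negative objective value; if only one of them, say $z$, has a negative entry, then $-z \ge 0$, so $x^*+t(-z)$ is feasible for every integer $t\ge 0$, and boundedness of \eqref{eq:1} forces $c^T(-z)\le 0$, i.e.\ $c^Tz\ge 0$, so $d=z$ works. Now put
\[
t^* := \min\bigl\{\, \lfloor x^*_j/(-d_j)\rfloor : j \text{ with } d_j<0 \,\bigr\}.
\]
Since every $j\in B$ satisfies $x^*_j > (m+2)(m\Delta)^m > (m\Delta)^m \ge |d_j|$, we get $t^*\ge 1$. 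The point $\bar x := x^*+t^*d$ is integral, satisfies $A\bar x = Ax^* = b$, and is non-negative (coordinates outside $C$ are unchanged, coordinates with $d_j\ge 0$ do not decrease, and for $d_j<0$ the choice of $t^*$ ensures $\bar x_j\ge 0$); moreover $c^T\bar x = c^Tx^* + t^*c^Td \ge c^Tx^*$, so optimality of $x^*$ makes $\bar x$ optimal as well. Finally, for an index $j_0$ attaining the minimum defining $t^*$ one has $0 \le \bar x_{j_0} < -d_{j_0} \le (m\Delta)^m < (m+2)(m\Delta)^m$, so $j_0$ drops out of the set of large components while no coordinate outside $B$ is affected; hence $\bar x$ has strictly fewer large components than $x^*$.

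Iterating this step terminates with an optimal solution whose large components index a linearly independent set of columns of $A$; there are at most $m$ of those, so at least $n-m$ components are bounded by $(m+2)(m\Delta)^m$, which proves the lemma. I expect the delicate point to be the selection of the direction $d$ together with the correct invocation of the boundedness hypothesis: this is exactly what excludes the degenerate situation where moving along the circuit would either violate non-negativity on one side only or contradict boundedness, whereas the norm bound on $z$ and the feasibility bookkeeping are routine.
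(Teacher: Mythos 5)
Your proof is correct, and its combinatorial core is the same as the paper's: if the columns indexed by the large components are dependent, extract a kernel vector of support at most $m+1$ whose entries are $(|C|-1)\times(|C|-1)$ minors and hence bounded by $(m\Delta)^m$, and use it to modify $x^*$ without losing feasibility, integrality or optimality. Where you genuinely diverge is in how the degenerate case $c^Td=0$ and the termination are handled. The paper perturbs $c$ so that $c^Ty\neq 0$ for all small integral $y$, which turns the single move $x^*\mapsto x^*+d$ into a strict improvement and yields an immediate contradiction with optimality; the price is an appeal to a ``standard perturbation'' whose validity (an optimum of the perturbed problem being an optimum of the original) is asserted rather than argued. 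You instead run an explicit augmentation loop: you orient the circuit vector so that it has a negative entry and non-negative objective value --- invoking boundedness of \eqref{eq:1} precisely in the case where the circuit vector is sign-constant --- take the largest integral step preserving non-negativity, and observe that the pivoting index drops out of the set of large components, so the loop terminates in at most $n$ rounds. Your version is more self-contained (it also makes explicit the Cramer/Hadamard bound that the paper only references as ``noted in the introduction'') and is constructive, at the cost of being longer; the paper's version is shorter but leans on the unproved perturbation step. Both establish the lemma, and your slightly sharper bound $\|z\|_\infty\le (m\Delta)^m$ is harmless.
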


The proof of this Lemma is in Appendix~\ref{append}.
Once this Lemma is shown, we have the following result. 

\begin{theorem}
	\label{thr:1}
	There exists an algorithm that solves the integer programming
	problem~\eqref{eq:1} in time bounded by $$\mathcal{O} \left( 2^{2m} \cdot \Delta^{m^3+3m^2+2m} \cdot n^{m^3+5m^2+6m+1} \right).$$
\end{theorem}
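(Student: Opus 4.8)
The plan is to turn Lemma~\ref{lem:1} into a branching-plus-dynamic-programming algorithm. Lemma~\ref{lem:1} tells us that an optimal solution $x^*$ splits into a ``large'' part supported on at most $m$ linearly independent columns of $A$, and a ``small'' part in which every component is bounded by $B := (m+2)\cdot(m\cdot\Delta)^m$. First I would enumerate the index set $S \subseteq \{1,\dots,n\}$ of the large components: since $|S|\le m$ and the corresponding columns are linearly independent, there are at most $\sum_{k=0}^{m}\binom{n}{k} = \mathcal{O}(n^m)$ such sets, and for each of them I would branch. For a fixed $S$, the large components are determined by a (candidate) linear system $A_S x_S = b'$ for some right-hand side $b'$; by Cramer's rule the value of each large component is a ratio of sub-determinants of an $m\times m$ system, hence a rational with numerator and denominator bounded by $(m\cdot\Delta)^m$, so there are only polynomially many possible integer values for the large block once $S$ is fixed. (Alternatively one guesses the residual right-hand side directly.) After fixing $S$ and the values of $x_S$, the residual problem is $\max\{c_{\bar S}^T x_{\bar S} : A_{\bar S} x_{\bar S} = b - A_S x_S,\ 0 \le x_{\bar S} \le B\cdot\mathbf{1},\ x_{\bar S}\in\Z\}$, a bounded-variable integer program in the remaining $\le n$ variables.

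The core step is then Papadimitriou-style dynamic programming on the residual problem, but now with the crucial observation that \emph{all variables are bounded by $B$}, so the intermediate right-hand sides that can arise while adding columns one at a time stay bounded: after processing $j$ columns, each partial sum $\sum A_{\cdot,t} x_t$ has entries bounded in absolute value by $j\cdot\Delta\cdot B \le n\Delta B$. Hence I would build the standard layered digraph whose vertices are pairs (column index processed so far, current partial right-hand side vector in $\Z^m$ with $\|\cdot\|_\infty \le n\Delta B$), with arcs corresponding to choosing a value in $\{0,\dots,B\}$ for the next variable, weighted by the corresponding contribution to $c^T x$, and solve a longest-path problem on this acyclic graph. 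The number of vertices per layer is $\mathcal{O}((n\Delta B)^m)$, there are $n+1$ layers, and each vertex has $B+1$ outgoing arcs, giving a running time of $\mathcal{O}(n\cdot(n\Delta B)^m\cdot B)$ for one choice of $(S, x_S)$.

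Finally I would multiply the cost of the inner dynamic program by the number of outer branches. The number of sets $S$ is $\mathcal{O}(n^m)$; the number of integer value-vectors $x_S$ to try for each is $\mathcal{O}((m\Delta)^{m}{}^{\,m})=\mathcal{O}(\Delta^{m^2})$ up to the $m$-dependent constant (each of $\le m$ components ranges over $\mathcal{O}((m\Delta)^m)$ values); and one must also remember to handle the boundedness/feasibility check and the trivial case where $\rank(A)<m$ by passing to a full-row-rank subsystem. Substituting $B = (m+2)(m\Delta)^m$ and collecting exponents of $\Delta$ and $n$, the product becomes $\mathcal{O}\!\left(2^{2m}\cdot\Delta^{m^3+3m^2+2m}\cdot n^{m^3+5m^2+6m+1}\right)$ after bounding the $m$-only factors (like $(m+2)^m$, $m^{m^2}$, binomial constants) by $2^{2m}$ times a polynomial absorbed into the $n$-exponent; I expect the only real bookkeeping obstacle to be verifying that these purely $m$-dependent constants indeed fit under the stated $2^{2m}$ and power-of-$n$ budget, and being careful that the partial-sum bound $n\Delta B$ raised to the $m$ is what produces the $\Delta^{m^2+m^3+\cdots}$ and $n^{m}$ contributions cleanly. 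Everything else — the branching count, Cramer's rule bound, and correctness of the longest-path formulation — is routine given Lemma~\ref{lem:1}.
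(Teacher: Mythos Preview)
Your overall architecture is the same as the paper's: use Lemma~\ref{lem:1} to split an optimal $x^*$ into a ``large'' block supported on at most $m$ linearly independent columns and a ``small'' block with every entry at most $B=(m+2)(m\Delta)^m$, enumerate the large index set, and run a Papadimitriou-type dynamic program on the small block.

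There is, however, a genuine error in how you propose to handle the large block. You claim that by Cramer's rule each large component is a ratio of sub-determinants bounded by $(m\Delta)^m$, and hence that there are only $\mathcal{O}(\Delta^{m^2})$ integer value-vectors $x_S$ to try. This is false: in Cramer's rule the numerator determinant has one column equal to the right-hand side $b'=b-b''$, and the entries of $b$ carry no bound in terms of $m,n,\Delta$. The large components can be as big as $b$ allows (think $A=[1]$, $b=[N]$), so they cannot be enumerated within the stated budget. Your final bookkeeping, which multiplies by this $\mathcal{O}(\Delta^{m^2})$ factor, therefore does not go through.

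The repair is exactly the alternative you mention parenthetically, and it is what the paper does: enumerate the \emph{bounded} residual right-hand side $b''$ (the contribution of the small block), for which $\|b''\|_\infty\le n\Delta B$ gives $\mathcal{O}\bigl((2n\Delta B+1)^m\bigr)$ candidates. For each guessed $b''$ and each choice of the large index set, the linear independence of the large columns means $A_{S}x_{S}=b-b''$ has a \emph{unique} solution, obtained by Gaussian elimination rather than enumeration; one then checks it is a non-negative integer vector. Redoing the product with this $(n\Delta B)^m$ factor, together with Papadimitriou's bound on the small subproblem and the substitution $m\le n$ to absorb the purely $m$-dependent constants, recovers the stated running time.
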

\begin{proof}
	We assume that problem is feasible and bounded.
	Let $x^\star$ be the optimal solution as defined in Lemma~\ref{lem:1} and let $S\subseteq\{1,\ldots,n\}$ be the set of  
	indices of the components of $x^*$ that are bounded by $(m+2)\cdot (m\cdot
	\Delta)^m$. 
	By $\bar S$, we denote the complement of $S$. Now, let 
	\begin{displaymath}
		b'' := \sum_{j \in S} x^*_j A_{\cdot,j} \,\, \text{ and } \, \,b' := b - b''. 
	\end{displaymath}
	It follows that $x^*_S$ is an optimal solution of the integer program
	\begin{equation}
		\label{eq:3}
		\max \left\{  \sum_{j \in S} c_j x_j \colon \sum_{j \in S} x_j A_{\cdot,j} = b'', x \in
		\Z_{\geq0}^{S}   \right\},
	\end{equation}
	and $x^*_{\overline{S}}$ is the optimal solution of the integer
	program 
	\begin{equation}
		\label{eq:4}
		\max \left\{  \sum_{j \in \overline{S}} c_j x_j \colon \sum_{j \in \overline{S}} x_j A_{\cdot,j} = b', x \in
		\Z_{\geq0}^{\overline{S}}   \right\}.  
	\end{equation}
	
	Since $\|b''\|_\infty\leq \Delta\cdot n\cdot (m+2) \cdot (m \cdot \Delta)^m$, the integer programming
	problem~\eqref{eq:3} can be solved with Papadimitriou's algorithm 
	\cite{MR677087} in
	time 
	$\mathcal{O} \big(2^m \cdot \Delta^{m^2+m} \cdot n^{m^2 + 2m + 1} \cdot m^{2m^2+m} \cdot (m \cdot\Delta)^{m^3+m^2} \big)$.
	
	Since the columns of $A_{\overline{S}}$ are linearly independent,
	$x^*_{\overline{S}}$ is the unique solution of the system of equations
	\begin{displaymath}
		\sum_{j \in \overline{S}} x_j A_{\cdot,j} = b',
	\end{displaymath}
	which can be found by using Gaussian elimination. 
	
	The algorithm starts by enumerating all possible
	\[\mathcal{O} \left(2^m \cdot \Delta^m \cdot n^m \cdot (m+2)^m \cdot (m\cdot \Delta)^{m^2}\right)\]
	vectors $b''$ and then proceeds by enumerating all $\binom{n}{m}
	= O(n^m)$ components of $x^*$ whose absolute value might be larger
	than $(m+2) \cdot (m \cdot \Delta)^m$ in the optimal solution $x^*$. Then,
	one solves the integer program~\eqref{eq:3} with Papadimitriou's
	algorithm and the integer program~\eqref{eq:4} using Gaussian
	elimination. 
	
	Altogether this yields a running time of 
	\begin{align*}
		\mathcal{O} \, \mathlarger( 2^m \cdot \Delta^m \cdot n^m \cdot (m+2)^m \cdot (m \cdot\Delta)^{m^2} \cdot n^m \cdot 2^m \\
		\Delta^{m^2+m} \cdot n^{m^2 + 2m + 1} \cdot m^{2m^2+m} \cdot (m \cdot\Delta)^{m^3+m^2}\mathlarger).
	\end{align*}
	We can assume that $m\leq n$ and obtain the running time
	$$\mathcal{O} \left( 2^{2m} \cdot \Delta^{m^3+3m^2+2m} \cdot n^{m^3+5m^2+6m+1} \right).$$
\end{proof}

\begin{proof}[Proof of Lemma~\ref{lem:1}]   
	We assume that the objective function vector $c$ is non-degenerate
	in the following sense: We suppose that $c^T y \neq0$ for each
	integral vector $y \neq0$ with $\|y\|_\infty \leq (m+2) \cdot (m\cdot \Delta)^m$. This can be
	achieved without loss of generality with standard perturbation,
	i.e., $c := c + (\varepsilon,\varepsilon^2, \ldots,\varepsilon^n)$ for $\varepsilon>0$ small.
	
	Let $x^*$ be an optimal solution and let $S\subseteq\{1,\ldots,n\}$ be a subset
	of indices for which $x^*_s \geq (m+2) \cdot (m\cdot \Delta)^m$ for each $s \in S$.
	If the columns of $A_{\cdot,S}$ are not linearly independent, then there
	exists a non-zero integral $d \in \Z^{|S|}$, $d \neq 0$, with $A_{\cdot,S}\cdot d =
	0$. We can assume that the support of $d$ fulfills $|\operatorname{supp}(d)|\le m+1$. Then, as noted in the introduction, there exists a feasible solution to $A_{\cdot,S}\cdot x =
	0$, $x\in\Z^{|S|}$, with   $\|d\|_\infty \leq (m+2) (m\cdot \Delta)^m$. Without loss of generality, 
	$c^Td>0$, using our modified objective function. But then, $(x^*_S + d, x^*_{\overline{S}})$ is a feasible
	solution with better objective function, which is a
	contradiction. Consequently, the number of components of $x^*$
	exceeding $(m+2) (m\cdot \Delta)^m$ is bounded by $m$ and the corresponding
	columns of $A$ are linearly independent. 
	
\end{proof}

\subsection{Extensions to the Mixed Integer Setting}

This section is devoted to generalizations of Lemma \ref{lem:1} and Theorem \ref{thr:1} in order to apply the idea from the previous section to mixed-integer optimization problems of the form 
\begin{equation}
	\label{eq:mip2}
	\max\left\{ c^Tx +d^Ty \colon Ax + By = b, \, x,y\geq0, \,x \in \Z^n,\, y \in \R^l\right\},
\end{equation}
where, as before, $A \in \Z^{m×n}$ with  upper bound $\Delta$
on the absolute values of $A$, $b \in \Z^m $, $c \in \Z^n$ and $d \in \Q^l$.

If we view problem (\ref{eq:mip2}) as a parametric integer problem in variables $x$ only, then Lemma \ref{lem:1} is applicable. This observation directly leads us to a mixed-integer version of  Lemma \ref{lem:1}.

\begin{lemma}
	\label{lem:2}
	If the mixed-integer program~\eqref{eq:mip2} has an optimal solution, then it
	has an optimal solution $(x^*,y^*)$ such that $x^* \in \Z^n$, where at least $n-m$
	components of   $x^*$ are bounded by $(m+2)\cdot (m\cdot \Delta)^m$.
	Furthermore, the  columns of $A$ corresponding to components of
	$x^*$ that are larger than $(m+2)\cdot (m\cdot \Delta)^m$ are linearly
	independent. 
\end{lemma}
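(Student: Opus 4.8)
The plan is to freeze the continuous block $y$ at its optimal value, absorb $By$ into the right-hand side, and then apply the argument of Lemma~\ref{lem:1} to the resulting \emph{pure} integer program. Concretely, let $(x^*,y^*)$ be an optimal solution of \eqref{eq:mip2}, set $b' := b - By^*$, and consider the pure integer program
\begin{equation*}
  \mathrm{IP}(b') \;:=\; \max\bigl\{\, c^Tx \;\colon\; Ax = b',\ x \geq 0,\ x \in \Z^n \,\bigr\}.
\end{equation*}
This program is feasible, since $x^*$ is feasible for it, and bounded: a ray $r \geq 0$ with $Ar = 0$ and $c^Tr > 0$ would, together with $y^*$, give feasible points $(x^* + tr, y^*)$ of \eqref{eq:mip2} with objective tending to $+\infty$, contradicting that \eqref{eq:mip2} has an optimal solution. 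Moreover the optimal value of $\mathrm{IP}(b')$ equals $c^Tx^*$, because any integral $x \geq 0$ with $Ax = b'$ and larger objective would yield a feasible solution $(x,y^*)$ of \eqref{eq:mip2} beating $(x^*,y^*)$.

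Next I would run the proof of Lemma~\ref{lem:1} on $\mathrm{IP}(b')$ — performing the same perturbation, replacing $c$ by $c + (\varepsilon,\varepsilon^2,\dots,\varepsilon^n)$ for small $\varepsilon>0$ — to obtain an optimal solution $\tilde x \in \Z^n$ of $\mathrm{IP}(b')$ with at least $n-m$ of its components bounded by $(m+2)(m\cdot\Delta)^m$, and with the columns of $A$ corresponding to the components of $\tilde x$ larger than $(m+2)(m\cdot\Delta)^m$ linearly independent. Since $c^T\tilde x = c^Tx^*$ equals the optimal value of \eqref{eq:mip2}, the pair $(\tilde x,y^*)$ is an optimal solution of \eqref{eq:mip2}, and it has precisely the structure asserted in Lemma~\ref{lem:2} (the requirement $\tilde x\in\Z^n$ being part of the definition of $\mathrm{IP}(b')$).

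The only genuine obstacle is that Lemma~\ref{lem:1} is stated for an integral right-hand side, while $b' = b - By^*$ need not be integral, so the lemma cannot be quoted verbatim. One therefore has to observe that the proof of Lemma~\ref{lem:1} never uses integrality of the right-hand side: the improving direction $d$ is a nonzero integral kernel vector of $A_{\cdot,S}$ with $|\operatorname{supp}(d)| \leq m+1$ and $\|d\|_\infty \leq (m+2)(m\cdot\Delta)^m$, and both its existence and this bound follow from Cramer's rule applied to $A$ alone, while feasibility of $\tilde x + d$ (integrality, nonnegativity on $S$, and $A(\tilde x + d) = b'$) uses only that $\tilde x$ is a nonnegative integer vector that is large on $S$. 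Once this is checked, together with the routine verification above that $\mathrm{IP}(b')$ is feasible and bounded, the numerical bound $(m+2)(m\cdot\Delta)^m$ and the count $n-m$ carry over unchanged, which gives Lemma~\ref{lem:2}.
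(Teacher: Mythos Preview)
Your proposal is correct and follows essentially the same route as the paper: the paper simply remarks that viewing \eqref{eq:mip2} as a parametric integer program in $x$ (with $y$ fixed) makes Lemma~\ref{lem:1} applicable, which is exactly your ``freeze $y^*$, set $b'=b-By^*$, apply the Lemma~\ref{lem:1} argument'' strategy. Your write-up is in fact more careful than the paper's one-line justification, since you explicitly address why $\mathrm{IP}(b')$ is feasible and bounded and why the possible non-integrality of $b'$ does not obstruct the proof of Lemma~\ref{lem:1}.
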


With this Lemma, we are prepared to prove a mixed-integer version of Theorem \ref{thr:1}. In the special case when $m$ is a constant, this result gives rise to a polynomial-time algorithm for solving the mixed-integer optimization problem (\ref{eq:mip2}).

\begin{theorem}
	\label{thr:2}
	There exists an algorithm that solves the mixed-integer programming
	problem~\eqref{eq:mip2} in time  bounded by $$ \mathcal{O} \left( 2^{2m} \cdot \Delta^{m^3+3m^2+2m} \cdot n^{m^3+5m^2+6m+1} \right) \cdot \kappa(m,l,\Delta),$$ where $\kappa(m,l,\Delta)$ is the worst case running time for solving a mixed-integer optimization problem of the type (\ref{eq:mip2}) with $m$ integer variables and $l$ continuous variables. 
\end{theorem}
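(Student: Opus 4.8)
The plan is to mirror the proof of Theorem~\ref{thr:1}, using Lemma~\ref{lem:2} in place of Lemma~\ref{lem:1} and replacing the call to Papadimitriou's algorithm on the ``small'' subproblem with the black-box mixed-integer solver whose running time is $\kappa(m,l,\Delta)$. Concretely, let $(x^*,y^*)$ be an optimal solution of the form guaranteed by Lemma~\ref{lem:2}, let $S\subseteq\{1,\dots,n\}$ be the index set of the at least $n-m$ components of $x^*$ bounded by $(m+2)\cdot(m\cdot\Delta)^m$, and let $\bar S$ be its complement, so $|\bar S|\leq m$ and the columns $A_{\cdot,\bar S}$ are linearly independent. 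Writing $b'' := \sum_{j\in S} x^*_j A_{\cdot,j}$ and $b' := b - b''$, the vector $(x^*_S, y^*)$ is an optimal solution of the mixed-integer program in the variables $(x_S,y)$ with constraint matrix $(A_{\cdot,S}\mid B)$, right-hand side $b''$, and $|S|\leq n$ integer plus $l$ continuous variables; this is exactly a problem of type~\eqref{eq:mip2}, so it can be solved in time $\kappa(m,l,\Delta)$. Simultaneously, $x^*_{\bar S}$ is forced: since the columns of $A_{\cdot,\bar S}$ are independent, $x^*_{\bar S}$ is the unique solution of $\sum_{j\in\bar S} x_j A_{\cdot,j} = b'$, recoverable by Gaussian elimination.

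The algorithm therefore enumerates all candidate vectors $b''$ together with all candidate index sets $\bar S$, and for each pair solves the small mixed-integer program~\eqref{eq:mip2}-type instance and the linear system, retaining the best feasible completion. The counting is identical to the pure-integer case: there are $\mathcal{O}\big(2^m\cdot\Delta^m\cdot n^m\cdot(m+2)^m\cdot(m\cdot\Delta)^{m^2}\big)$ choices of $b''$ (each entry of $b''$ lies in $[-\Delta n(m+2)(m\cdot\Delta)^m,\,\Delta n(m+2)(m\cdot\Delta)^m]$), and $\binom{n}{m}=\mathcal{O}(n^m)$ choices of $\bar S$. Multiplying these enumeration bounds by the per-iteration cost $\kappa(m,l,\Delta)$ (the Gaussian elimination step is dominated) and using $m\leq n$, one arrives at exactly the claimed bound $\mathcal{O}\big(2^{2m}\cdot\Delta^{m^3+3m^2+2m}\cdot n^{m^3+5m^2+6m+1}\big)\cdot\kappa(m,l,\Delta)$; note the polynomial factor here is in fact smaller than in Theorem~\ref{thr:1}, since we no longer need the extra $m^{2m^2+m}\cdot(m\cdot\Delta)^{m^3+m^2}$ term that came from Papadimitriou's dynamic program, so the stated bound holds with room to spare.

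One routine point to verify is feasibility and boundedness bookkeeping: we should first detect whether~\eqref{eq:mip2} has an optimal solution at all (it could be infeasible or unbounded), which can be done by solving the LP relaxation or by a preliminary call to the solver; only then does Lemma~\ref{lem:2} apply. A second point is that when we split off the $\bar S$-variables we must still require $x^*_{\bar S}\geq 0$ and $x^*_{\bar S}\in\Z^{|\bar S|}$, so after Gaussian elimination we discard any pair $(b'',\bar S)$ whose unique solution violates integrality or nonnegativity; this does not affect the running time. The main obstacle, such as it is, is purely one of presentation rather than mathematics: we must phrase the recursion so that the inner subproblem~\eqref{eq:3}-analogue is genuinely an instance of~\eqref{eq:mip2} with the same parameters $m$ and $l$ and entry bound $\Delta$, so that invoking $\kappa(m,l,\Delta)$ is legitimate and the argument is not circular — this is fine because restricting to the columns in $S$ and changing the right-hand side to $b''$ leaves the number of rows, the continuous block $B$, and the entry bound unchanged.
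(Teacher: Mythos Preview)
Your decomposition places the continuous variables $y$ on the wrong side, and this breaks the argument in several places at once. With $b'':=\sum_{j\in S}x^*_jA_{\cdot,j}$ you have $A_{\cdot,S}x^*_S+By^*=b''+By^*\neq b''$, so $(x^*_S,y^*)$ does \emph{not} satisfy the constraint you wrote for the $S$-subproblem; likewise $A_{\cdot,\bar S}x^*_{\bar S}=b'-By^*\neq b'$, so Gaussian elimination on $A_{\cdot,\bar S}x=b'$ does not recover $x^*_{\bar S}$. More fundamentally, even if you repaired the right-hand sides, the $S$-subproblem carries $|S|\leq n$ integer variables, whereas $\kappa(m,l,\Delta)$ is, by the statement of the theorem, the running time for instances with $m$ \emph{integer variables}; invoking it on an instance with up to $n$ integer variables is exactly the circularity you warned yourself against. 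Your last paragraph claims legitimacy because the number of rows stays $m$, but the first argument of $\kappa$ is the number of integer variables, not the number of rows.

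The paper's split is the reverse of yours: the $S$-block is a \emph{pure} integer program $A_{\cdot,S}x_S=b''$ with $\|b''\|_\infty$ small, solved by Papadimitriou exactly as in Theorem~\ref{thr:1}; the continuous variables $y$ are grouped with the at most $m$ integer variables $x_{\bar S}$ into the mixed-integer program $A_{\cdot,\bar S}x_{\bar S}+By=b'$, and it is \emph{this} problem (with $|\bar S|\leq m$ integer variables) to which $\kappa(m,l,\Delta)$ is applied. In particular Papadimitriou is not replaced by $\kappa$; both are used, and the Papadimitriou factor is precisely what fills out the announced bound.
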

\begin{proof}
	Let $(x^*,y^*)$ be an optimal solution of problem (\ref{eq:mip2}) satisfying 
	Lemma~\ref{lem:2}. By $S\subseteq\{1,\ldots,n\}$ we denote the
	indices of the   components of  $x^* $ that are bounded by $(m+2)\cdot (m\cdot
	\Delta)^m$. Furthermore let 
	\begin{displaymath}
		b'' := \sum_{j \in S} x^*_j A_{\cdot,j} \,\, \text{ and } \, \,b' := b - b''. 
	\end{displaymath}
	Then, $x^*_S$ is an optimal solution of the pure integer program
	\begin{equation}
		\label{eq:5}
		\max \left\{  \sum_{j \in S} c_j x_j \colon \sum_{j \in S} x_j A_{\cdot,j} = b'', x \in
		\Z_{\geq0}^{|S|}   \right\}, 
	\end{equation}
	and $(x^*_{\overline{S}},y^*)$ is the optimal solution of the mixed-integer
	program 
	\begin{equation}
	\begin{aligned} \label{eq:6} 
		\max & \left\{ \sum_{j \in \overline{S}} c_j x_j + \sum_{j=1}^l d_j y_j \colon \right.\\
		& \left. \sum_{j \in \overline{S}} x_j A_{\cdot,j} \,+\,  \sum_{j=1}^l y_j B_{\cdot,j} = b', x \in
		\Z_{\geq0}^{|\overline{S}|},\, y \geq 0 \right\}.
	\end{aligned}
	\end{equation}
	As in the previous section, the algorithm first enumerates all possible
	\[\mathcal{O} \left(2^m \cdot \Delta^m \cdot n^m \cdot (m+2)^m \cdot (m\cdot \Delta)^{m^2}\right)\]
	vectors $b''$, which satisfy $$\|b''\|_\infty\leq \Delta \cdot n\cdot (m+2) \cdot (m \cdot \Delta)^m.$$ Next, one proceeds by enumerating all $\binom{n}{m}
	= O(n^m)$ components of $x^*$ whose absolute value might be larger
	than $(m+2) \cdot (m \cdot \Delta)^m$ in the optimal solution $x^*$. The corresponding  integer programming
	problem~\eqref{eq:5} can be solved with Papadimitriou's algorithm in
	time $\mathcal{O} \big(2^m \cdot \Delta^{m^2+m} \cdot n^{m^2 + 2m + 1} \cdot m^{2m^2+m} \cdot (m \cdot\Delta)^{m^3+m^2} \big)$. 
	Since the columns of $A_{\overline{S}}$ are linearly independent, $| \overline{S}| \leq m$.
	Therefore, the mixed-integer program (\ref{eq:6}) can be solved in time $\kappa(m,l,\Delta)$.

	Altogether, this yields the proposed running time.
\end{proof}

\section{Application to integer programs with non-zero and bounded sub-determinants}
Let us now use Theorem~\ref{thr:1} to prove that \eqref{equ:opt_problem} can be solved in polynomial-time under the assumption that $A$ has rank $n$, $A$ has no singular $(n \times n)$-submatrices and $\maxsubdet(A) \le \deltabound$ for a constant $\deltabound < \infty$. Our plan is as follows:

First, we permute the rows of $A$ and transform the permuted matrix into Hermite Normal Form to obtain the representation illustrated in (\ref{equ:HNF}). This can be accomplished in polynomial-time (cf. \cite{frumkin1976}). Then, we show that the entries of $A$ are bounded by a constant $\entrybound(\deltabound)$ depending solely on $\deltabound$ and that for $n$ large enough, $A$ can have at most $n+1$ rows. Using these results, we find an efficient algorithm for \eqref{equ:opt_problem}: We reformulate (\ref{equ:opt_problem}) as a program of the form given in (\ref{eq:1}) {with a matrix whose number of rows is bounded by a constant which only depends on $\deltabound$.} We then apply Theorem~\ref{thr:1} to get a polynomial running-time algorithm. 

By permuting the rows of $A$, we may assume without loss of generality that $\det (A_{1: n,\cdot}) \ne 0$, where $A_{1: n,\cdot}$ is the uppermost $(n\times n)$-submatrix of $A$. Let $U$ be the unimodular matrix such that $A_{1:n,\cdot}U$ is in   Hermite Normal Form (cf. \cite{Schrijver86}, Part II, Chapters 4 and 5). 
Note that as $U^{-1}\in\Z^{n\times n}$, $AU$ has the same $(n\times n)$-sub-determinants as $A$ and that $U$ can be calculated in polynomial-time (in $m$, $n$ and the encoding size of $A$) and is polynomially bounded in the size of $A$ (cf. \cite{frumkin1976} or \cite{Schrijver86}, Part II, Chapter 5.2).
After a change of variables from $x$ to $U^{-1}x$, we can assume from now on that $A$ is a lower triangular matrix with diagonal entries
$$\left(1,\dots,1,\deltabound_1,1,\dots,1,\deltabound_2,1,\dots,1,\deltabound_k,1,\dots,1,A_{n,n}\right).$$
Moreover, we have that $\deltabound_i \le \deltabound$ for all $i\in\{1,\dots,k\}$ and for all $1\le j<i\le n$ we have $A_{i,j}<A_{i,i}$.

Our next step is to simplify $A$ further.  To this end, we apply row permutations  and column permutations  iteratively: For $i\in \{1,\dots,k\}$, assume that after row and column permutations, the rows corresponding to the diagonal entries $\deltabound_{i+1},\dots,\deltabound_k$ are at positions $n-i-1,\dots,n$. 
Let $A^{\deltabound_i}$ be the row corresponding to $\delta_i$. 
Then, exchange rows $A^{\deltabound_i}$ and $A_{n-(i+1),\cdot}$ as well as the column containing $\delta_i$ with column $n-(i+1)$.
This leads to a representation of the matrix $A$ as follows:

\begin{align}\label{equ:HNF}
	A=\left[ \begin{matrix}
		1 &&&&&& ~ \\
		& \ddots &&&&& ~ \\
		&&1&&&& ~ \\
		*&\cdots&*& \deltabound_1 &&& ~ \\
		\vdots&&&\ddots& \ddots && ~ \\
		*&\cdots&\cdots&\cdots&*& \deltabound_k& ~ \\
		A_{n,1} &\cdots&\cdots&\cdots&\cdots&  A_{n,n-1} & A_{n,n} \\
		A_{n+1,1} &\cdots&\cdots&\cdots&\cdots& A_{n+1,n-1} & A_{n+1,n} \\
		\vdots & \vdots & \vdots & \vdots& \vdots& \vdots& \vdots \\
		A_{m,1} &\cdots&\cdots&\cdots&\cdots& A_{m,n-1} & A_{m,n}
	\end{matrix} \right].
\end{align}

All entries denoted by $\ast$ are numbers between $0$ and the diagonal entry of the same row.

The submatrix consisting of the first $n$ rows has a determinant that is bounded by $\deltabound$. This allows us to conclude that $|\deltabound_1|\cdots |\deltabound_k|\cdot |A_{n,n}|\le \deltabound$ and thus $k\le\log_2(\deltabound)$.

We will now make use of two Lemmas. The corresponding proofs are postponed to Appendix \ref{append}.
\begin{lemma}\label{lemma:boundedentries}
	The entries in $A$ are bounded by a constant $C(\deltabound)$ which only depends on $\deltabound$.
\end{lemma}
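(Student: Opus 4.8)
The plan is to exploit the structure exhibited in~\eqref{equ:HNF} together with the hypothesis that \emph{every} $(n\times n)$-submatrix of $A$ is non-singular and has absolute determinant at most $\deltabound$. Recall that the top $n\times n$ block $T$ of~\eqref{equ:HNF} is lower triangular with all-ones diagonal except in $k\le\log_2(\deltabound)$ positions, where the diagonal entries are $\deltabound_1,\dots,\deltabound_k$ with $\prod_i|\deltabound_i|\cdot|A_{n,n}|\le\deltabound$; moreover every off-diagonal $\ast$ entry of a row with diagonal $\deltabound_i$ lies in $\{0,1,\dots,\deltabound_i-1\}$, hence is already bounded by $\deltabound$. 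So the only entries that still need to be controlled are the entries of the last row $A_{n,\cdot}$ of $T$ and the entries of the ``extra'' rows $A_{n+1,\cdot},\dots,A_{m,\cdot}$.

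First I would bound the last row of $T$. Fix a column index $j<n$. Consider the $n\times n$ submatrix $T^{(j)}$ obtained from $T$ by deleting row $n$ and using instead... more precisely, form the matrix consisting of rows $\{1,\dots,n-1\}$ of $T$ together with the $n$-th row, but with column $j$'s contribution isolated by a cofactor expansion: expanding $\det T$ along the last row, the cofactor of the entry $A_{n,j}$ is, up to sign, the determinant of an upper-left-ish triangular minor whose value is a product of a subset of the $\deltabound_i$'s (times possibly $1$'s), hence a nonzero integer of absolute value at most $\deltabound$. Since $|\det T|\le\deltabound$ as well and $\det T$ decomposes as a sum over the last row, the key point is that we can choose, for each target column $j$, an $(n\times n)$-submatrix of the \emph{full} matrix $A$ in which $A_{n,j}$ appears against a triangular structure whose cofactor is a nonzero product of $\deltabound_i$'s; non-singularity of that submatrix forces $|A_{n,j}|$ to be bounded by a function of $\deltabound$ (concretely, reducing $A_{n,j}$ modulo that cofactor cannot change the residue without changing the determinant, and two triangular completions differing only in entry $(n,j)$ by a multiple of the cofactor give determinants differing by a multiple of $\deltabound!$-ish quantity — all bounded). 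Carrying this out row-by-row and column-by-column, every entry of $A_{n,\cdot}$ is bounded by some $C_1(\deltabound)$.

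Next I would bound each extra row $A_{i,\cdot}$ for $i>n$ by a near-identical argument: replace row $n$ of the (now entry-bounded in its top part) matrix by row $i$ to obtain an $(n\times n)$-submatrix $A^{(i)}$ of $A$; it is non-singular with $|\det A^{(i)}|\le\deltabound$, and expanding along its bottom row against the fixed triangular structure above shows each entry $A_{i,j}$ is bounded in terms of $\deltabound$ and the already-established bound $C_1(\deltabound)$ on the upper block. Hence all entries of $A$ are bounded by a single constant $\entrybound(\deltabound)$.

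The main obstacle is making the cofactor argument of the second paragraph clean: one must verify that for every column $j$ there genuinely is an $(n\times n)$-submatrix of $A$ in which the entry $A_{n,j}$ (respectively $A_{i,j}$) is pitted against a triangular block of bounded nonzero determinant, so that non-singularity and the determinant bound together pin the entry down — equivalently, that no entry can ``escape'' by being invisible in all bounded-cofactor minors. This is where the hypotheses $\rank(A)=n$ and ``no singular $(n\times n)$-submatrix'' do the real work, and it is the step I would write out in full detail in the appendix.
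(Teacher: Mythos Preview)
Your overall strategy—pick an $(n\times n)$-submatrix containing the extra row $\alpha$, expand along $\alpha$, and use that the cofactor of the target entry is a nonzero product of the $\deltabound_i$'s while the remaining cofactors are small—is exactly the paper's strategy. However, the concrete submatrix you name does not work, and the role you assign to the non-singularity hypothesis is mistaken.

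First a small point: there is no need to separately bound the $n$-th row $A_{n,\cdot}$. It is part of the Hermite Normal Form block, so each of its off-diagonal entries already lies in $\{0,\dots,A_{n,n}-1\}\subseteq\{0,\dots,\deltabound-1\}$. Only the rows $A_{i,\cdot}$ with $i>n$ need an argument.

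The real gap is your choice of submatrix for those rows. You propose to replace row $n$ by $\alpha=A_{i,\cdot}$ and expand along $\alpha$. But in the resulting matrix the first $n-1$ rows still contain the unit vectors $e_1^T,\dots,e_{n-k-1}^T$ and the lower-triangular $\deltabound$-rows, all of which have zeros in column $n$. Consequently, the cofactor of $\alpha_j$ vanishes for every $j<n$ (deleting column $j\le n-k-1$ leaves row $j$ identically zero; deleting a column $j\in\{n-k,\dots,n-1\}$ leaves a block whose last column, column $n$, is zero). The only surviving cofactor is that of $\alpha_n$, so your expansion yields $|\det A^{(i)}|=|\alpha_n|\cdot\deltabound_1\cdots\deltabound_k$ and bounds $\alpha_n$ alone. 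No appeal to the ``no singular submatrix'' hypothesis rescues this: the vanishing cofactors are a feature of the triangular structure, not an artifact of a degenerate choice.

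The fix, which is what the paper does, is to vary the replaced row with the target column: to bound $\alpha_j$, remove row $j$ (not row $n$) from the top $n\times n$ block and insert $\alpha$ there. After peeling off the surviving unit rows (and, when $j\ge n-k$, the surviving $\deltabound$-rows above row $j$), one is left with a square matrix $\bar A$ of size at most $(k+2)\times(k+2)\le(\log_2\deltabound+2)^2$ whose first row is $[\alpha_j,\alpha_{n-k},\dots,\alpha_n]$ (or $[\alpha_j,\dots,\alpha_n]$ in the case $j\ge n-k$). Now the cofactor of $\alpha_j$ is the full lower-triangular block with diagonal $\deltabound_1,\dots,\deltabound_k,A_{n,n}$, hence a nonzero integer, while the other cofactors are at most $(k+1)\times(k+1)$ minors of the HNF block and are therefore bounded by Hadamard. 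Note that this makes the bound on $\alpha_j$ depend on $\alpha_{n-k},\dots,\alpha_n$ (respectively $\alpha_{j+1},\dots,\alpha_n$), so the argument is a short downward recursion starting from $\alpha_n$; this recursive structure is absent from your sketch.

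Finally, the hypothesis that all $(n\times n)$-submatrices are non-singular plays no role whatsoever in this lemma—only the bound $\maxsubdet(A)\le\deltabound$ is used, together with the fact that \emph{one} $(n\times n)$-submatrix is non-singular so that the HNF form~\eqref{equ:HNF} exists. The paper records this explicitly as a remark. So your diagnosis that ``no singular $(n\times n)$-submatrix'' is where ``the real work'' happens is off target; that hypothesis is needed only later, for Lemma~\ref{lemma:np1rows}.
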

\begin{remark}
	Lemma \ref{lemma:boundedentries} also holds in the case where $A$ has singular $(n \times n)$ submatrices.
\end{remark}
\begin{lemma}\label{lemma:np1rows}
	Let $n> (2C(\deltabound)+1)^{\log_2\deltabound+3}+\log_2\deltabound$. Then, $A$ has at most $n+1$ rows.
\end{lemma}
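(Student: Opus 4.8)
The plan is to argue by contradiction: assume $A$ has at least $n+2$ rows, and exhibit a singular $(n\times n)$-submatrix, which contradicts the standing assumption that $A$ has no singular $(n\times n)$-submatrices. Throughout I work with the normal form \eqref{equ:HNF}. Set $p:=n-k-1$; then rows $1,\dots,p$ of $A$ are exactly the unit vectors $e_1,\dots,e_p$, the rows at positions $n-k,\dots,n-1$ carry the diagonal entries $\delta_1,\dots,\delta_k$, row $n$ carries $A_{n,n}$, and rows $n+1,\dots,m$ are unconstrained apart from the entry bound. Recall that by Lemma~\ref{lemma:boundedentries} every entry of $A$ lies in $\{-C(\delta),\dots,C(\delta)\}$, and that $k\le\log_2\delta$.

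Assuming $m\ge n+2$, the $k+3$ rows $n-k,n-k+1,\dots,n+2$ are all present. For each column $j\in\{1,\dots,p\}$ I would form the vector $w_j:=(A_{n-k,j},A_{n-k+1,j},\dots,A_{n+2,j})\in\Z^{k+3}$, the restriction of the $j$-th column to these rows. Each coordinate of $w_j$ takes at most $2C(\delta)+1$ values, so at most $(2C(\delta)+1)^{k+3}\le(2C(\delta)+1)^{\log_2\delta+3}$ distinct vectors can occur among the $w_j$; in fact the $k+1$ coordinates coming from rows $n-k,\dots,n$ are nonnegative and smaller than the respective diagonal entries, whose product is at most $\delta$, and this can be used to absorb the additive term. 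The hypothesis $n>(2C(\delta)+1)^{\log_2\delta+3}+\log_2\delta$ is precisely what forces $p=n-k-1$ to strictly exceed the number of attainable values of $w_j$, so by the pigeonhole principle there are distinct columns $j_1<j_2$ in $\{1,\dots,p\}$ with $w_{j_1}=w_{j_2}$; equivalently, $A_{i,j_1}=A_{i,j_2}$ for all $i\in\{n-k,\dots,n+2\}$.

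It then remains to read off the singular submatrix. Let $M$ be the $(n\times n)$-submatrix of $A$ formed by all $n$ columns together with the $n$ rows indexed by $\{1,\dots,n+2\}\setminus\{j_1,j_2\}$ (a set of exactly $n$ rows, which contains all of $n-k,\dots,n+2$ since $j_1,j_2\le p<n-k$). Columns $j_1$ and $j_2$ of $M$ are identical: for a retained row $r\le p$ one has $r\ne j_1,j_2$, and since that row equals $e_r$ both entries vanish; for a retained row $r\in\{n-k,\dots,n+2\}$ the two entries agree by the choice of $j_1,j_2$. Hence $\det M=0$, contradicting the hypothesis on $A$, and therefore $m\le n+1$.

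The main obstacle I anticipate is the pigeonhole bookkeeping in the middle step — making the stated threshold on $n$ line up exactly with the count of possible restricted columns $w_j$, which is where the nonnegativity and the bound $\delta_1\cdots\delta_k\,A_{n,n}\le\delta$ on the product of the relevant diagonal entries must be invoked. The construction of $M$ and the verification that two of its columns coincide are direct consequences of the shape \eqref{equ:HNF} and require no further ideas.
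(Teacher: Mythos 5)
Your proposal is correct and follows essentially the same route as the paper: pigeonhole on the columns restricted to rows $n-k,\dots,n+2$ (using the entry bound from Lemma~\ref{lemma:boundedentries} and $k\le\log_2\deltabound$) to find two columns $j_1,j_2<n-k$ with identical restrictions, then observe that deleting rows $j_1$ and $j_2$ yields an $(n\times n)$-submatrix with two equal columns, hence singular — a contradiction. The only cosmetic difference is that the paper first reduces the determinant of that submatrix to a $(k+3)\times(k+3)$ determinant via the unit rows and argues on its first two columns, whereas you exhibit the two coinciding columns of the full $n\times n$ submatrix directly; both versions share the same (mild) bookkeeping looseness in matching the threshold on $n$ to the pigeonhole count.
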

We are now prepared to prove the main result of this section.
\begin{theorem}\label{thr:application}
	There exists an algorithm that solves problem (\ref{equ:opt_problem}) in time polynomially bounded by $m$, $n$, $\delta$ and the encoding size of the input data.
\end{theorem}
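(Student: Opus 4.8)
The plan is to use the preprocessing that produces the shape~(\ref{equ:HNF}) to rewrite~(\ref{equ:opt_problem}) as an equivalent program in standard form~(\ref{eq:1}) whose constraint matrix has only $O(\log_2\deltabound)$ rows and entries bounded by $C(\deltabound)$, and then to invoke Theorem~\ref{thr:1}. I would first dispose of the case $n\le (2C(\deltabound)+1)^{\log_2\deltabound+3}+\log_2\deltabound$: there the number of variables of~(\ref{equ:opt_problem}) is bounded by a constant depending only on $\deltabound$, so the problem is solvable in polynomial time by Lenstra's algorithm for integer programming in fixed dimension. So assume $n$ exceeds this threshold; then Lemma~\ref{lemma:np1rows} gives $m\le n+1$ and Lemma~\ref{lemma:boundedentries} gives $|A_{i,j}|\le C(\deltabound)$ for all $i,j$. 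I would also assume, as one may after polynomial-time preprocessing, that~(\ref{equ:opt_problem}) is feasible and bounded: the dynamic program behind Theorem~\ref{thr:1} reports infeasibility, and a feasible integer program is unbounded exactly when its linear relaxation is, since the recession cone of a rational polyhedron is rational.

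Write $A=\binom{B}{D}$ with $B:=A_{1:n,\cdot}$ lower triangular and $D:=A_{n+1:m,\cdot}$ consisting of at most one row. By~(\ref{equ:HNF}), $B$ has diagonal $(1,\dots,1,\deltabound_1,\dots,\deltabound_k,A_{n,n})$ with $k\le\log_2\deltabound$, and every off-diagonal $\ast$-entry is nonnegative and strictly below the diagonal entry of its row; hence a row with diagonal~$1$ has all its other entries equal to $0$, so the first $n-k-1$ rows of $B$ equal the unit vectors $e_1,\dots,e_{n-k-1}$. The key point is that the first $n-k-1$ constraints of~(\ref{equ:opt_problem}) are therefore just $x_i\le b_i$. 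For each of these I would introduce a slack $s_i\ge 0$ and substitute $x_i=b_i-s_i$ into the objective and into the remaining $k+1+|D|\le\log_2\deltabound+2$ inequalities; in those inequalities $x_i$ appears with a coefficient of absolute value at most $C(\deltabound)$ --- an $\ast$-entry of $B$, which lies below a diagonal entry $\le\deltabound$, or an entry of $D$, bounded by Lemma~\ref{lemma:boundedentries} --- so the substitution only changes the right-hand side to something bounded by a polynomial in $n$, $\deltabound$ and $\|b\|_\infty$, which is irrelevant for Theorem~\ref{thr:1}.

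To finish I would write each remaining free variable $x_{n-k},\dots,x_n$ as a difference of two nonnegative variables (scaling $c$ to be integral if necessary) and add one slack variable to each surviving inequality, arriving at a program in standard form~(\ref{eq:1}) with at most $\log_2\deltabound+2$ equality constraints, with $n+2k+3\le n+2\log_2\deltabound+3$ nonnegative integer variables, and with constraint-matrix entries bounded by $C(\deltabound)$. Applying the algorithm of Theorem~\ref{thr:1} with $m':=\log_2\deltabound+2$, $\Delta':=C(\deltabound)$ and $n':=n+2\log_2\deltabound+3$ then costs $\mathcal{O}\bigl(2^{2m'}(\Delta')^{{m'}^{3}+3{m'}^{2}+2m'}(n')^{{m'}^{3}+5{m'}^{2}+6m'+1}\bigr)$, which is polynomial in $n$ --- hence in the encoding size of the input --- for every fixed value of $\deltabound$; together with the polynomial cost of the Hermite-normal-form reduction, the row and column permutations, and the substitutions, this gives the stated running time, and an optimal solution of~(\ref{equ:opt_problem}) is obtained by reversing the substitutions, the column permutations, and the unimodular change of variables~$U$.

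The only step where I expect any genuine difficulty --- everything else being routine bookkeeping --- consists of the two facts that carry the argument: that after the Hermite-normal-form reduction the ``trivial'' rows of $B$ are genuine single-variable bounds $x_i\le b_i$, so that eliminating the corresponding $\Theta(n)$ variables does not inflate the coefficients of the surviving inequalities (this is exactly what the identity block in~(\ref{equ:HNF}) buys us), and that $m\le n+1$, so that only $O(\log_2\deltabound)$ honest constraints remain after the elimination. Both are precisely what the construction preceding the theorem, together with Lemmas~\ref{lemma:boundedentries} and~\ref{lemma:np1rows}, is designed to provide.
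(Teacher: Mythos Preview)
Your proposal is correct and follows essentially the same route as the paper: dispose of small $n$ via Lenstra, otherwise use Lemmas~\ref{lemma:boundedentries} and~\ref{lemma:np1rows} to reduce to at most $\log_2\deltabound+2$ ``honest'' inequalities with entries bounded by $C(\deltabound)$, eliminate the $n-k-1$ single-variable bounds, split the remaining free variables, add slacks, and invoke Theorem~\ref{thr:1}. The only cosmetic difference is that the paper negates the identity block to turn the trivial rows into lower bounds and then shifts by $\binom{\widetilde b}{0}$, whereas you keep them as upper bounds and substitute $x_i=b_i-s_i$; the resulting standard-form program has the same dimensions $(k+2)\times(n+2k+3)$ and entry bound $C(\deltabound)$ in both versions.
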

\begin{proof}[Proof of Theorem~\ref{thr:application}]
	If $n\leq (2C(\deltabound)+1)^{\log_2\deltabound+3}+\log_2\deltabound$, then by using Lenstra's algorithm (cf. \cite{lenstrajr1983Intprofixnumvar}), the corresponding integer program can be solved in polynomial-time. 
	
	Otherwise, $n> (2C(\deltabound)+1)^{\log_2\deltabound+3}+\log_2\deltabound$. By Lemma \ref{lemma:np1rows}, $A$ has at most $n+1$ rows. 
	Furthermore, in view of \eqref{equ:HNF}, we may assume that $A$ is of the form
	\begin{align*}
		A = 	
		\left[ \begin{matrix}
			-\mathcal{I} & {\bf0} \\
			\widetilde A & \widehat A 
		\end{matrix} \right],
	\end{align*}
	where $\mathcal{I}$ denotes the $(n-k-1)$-dimensional identity matrix, ${\bf0}=\{0\}^{(n-k-1)\times (k+1)}$, 
	\begin{align*}
		\widetilde A := 	
		\left[ \begin{matrix}
			*& \cdots & *  \\
			\vdots &  & \vdots \\
			* & \cdots & *  \\
			\alpha_{1} & \ldots &\alpha_{n-k-1} \\
			\beta_{1} & \ldots &\beta_{n-k-1} \\
		\end{matrix} \right]\in \Z^{(k+2)\times (n-k-1)}
	\end{align*}
	and
	\begin{align*}
		\widehat A :=\left[ \begin{matrix}
			\deltabound_1 & ~ & ~ & ~ \\
			\ddots & \ddots & ~& ~ \\
			\cdots & * & \deltabound_k& ~ \\
			\alpha_{n-k} & \ldots &  \alpha_{n-1} & \alpha_{n} \\
			\beta_{n-k} & \ldots &  \beta_{n-1} & \beta_{n} \\
		\end{matrix} \right]\in\Z^{(k+2)\times (k+1)}.
	\end{align*}
	It holds that $\deltabound\ge\deltabound_i > 0$ and by Lemma \ref{lemma:boundedentries}, $|A_{ij}|\leq C(\deltabound)$ for all $1\le i\le n+1$, $1\le j \le n$.\\
	
	We denote $\bar A:=\left[\begin{matrix} \widetilde A & \widehat A \end{matrix}\right]$.	For a vector $x\in\R^n$, let $\widetilde x\in\Z^{n-k-1}$ be the first $n-k-1$, $\widehat x\in\Z^{k+1}$ be the last $k+1$ components of $x$.
	Thus, $\bar A x = \widetilde A \widetilde x + \widehat A \widehat x$. We write $x = \left(\begin{matrix}
	\widetilde x\\\widehat x
	\end{matrix}\right)$.
	Similarly, we write $b = \left(\begin{matrix}
	\tilde b\\\bar b
	\end{matrix}\right)$, where $\widetilde b\in\Z^{n-k-1}$ and $\bar b\in\Z^{k+2}$, such that $Ax\le b \Leftrightarrow \widetilde x\ge \widetilde b,~\bar Ax\le\bar b$. 
	
	$Ax\le b$, $x \in \Z^n$ can then be reformulated as $\bar A x \le \bar b,$ $\tilde x \ge \widetilde b$, $x \in \Z^n$, which in turn can be restated as 
	\begin{align*}
		\bar Ay \le \ & \bar b-\bar A\binom{\widetilde b}{0},\\
		\widetilde y \ge \ & 0,
	\end{align*}
	where $y := x - \binom{\widetilde b}{0}$.
	
	This reformulation leads to the following maximization problem:
	\begin{align}\label{eq:reform1}
		\max \left\{c^Tx \colon \bar Ax \le \bar b,~x=\left(\begin{matrix}
			\widetilde x \\\widehat x
		\end{matrix}\right),~\widetilde x \ge 0,~x\in\Z^n\right\}.
	\end{align}
	To arrive at a problem of the form (\ref{eq:1}), we apply a standard technique: We introduce new variables $\widehat x_i^+:= \max \{\widehat x_i,0 \}$ and $\widehat x_i^- := \min\{\widehat x_i,0 \}$ as well as slack variables $z\ge 0$ and reformulate (\ref{eq:reform1}) as
	\begin{align}\label{eq:reform2}
		\max &~\left[\widetilde c^T~~\widehat c^T~~-\widehat c^T~~\mathbf{0}^T\right]\left[\begin{matrix}
			\widetilde{x}&
			\widehat x^+&
			\widehat x^-&
			z
		\end{matrix}\right]^T\nonumber\\
		\st &~\left[\widetilde A ~~\widehat A~~ -\widehat A~~ \mathcal{I}  \right] \left[\begin{matrix}
			\widetilde{x} &
			\widehat x^+ &
			\widehat x^- &
			z
		\end{matrix}\right]^T = \bar b,\\
		&~\widetilde x,~\widehat x^+,~\widehat x^-,~z\ge 0,\nonumber
	\end{align}
	where $c = \left(\begin{matrix}
	\widetilde c\\\widehat c
	\end{matrix}\right)$. Here, $\mathbf{0}\in\Z^{k+2}$ denotes the   $(k+2)$-dimensional zero vector and $\mathcal{I}$ is the $(k+2) \times (k+2)$-identity matrix.
	
	Recall that  $k\le \log_2(\deltabound)$. Hence, the matrix $\left[\widetilde A ~~\widehat A~~ -\widehat A~~ \mathcal{I}  \right]\in\Z^{(k+2)\times (n+2k+3)}$ in (\ref{eq:reform2}) has at most $\log_2 \deltabound+2$ rows and at most $n+2\log_2\deltabound+3$ columns. Furthermore, in view of Lemma~\ref{lemma:boundedentries}, each entry is bounded by $C(\deltabound)$. 
	We can therefore apply  Theorem~\ref{thr:1}. This gives the desired result, where the overall running time is bounded by  
	\begin{align*}
		\mathcal{O} \left(n^{(\log_2\deltabound+2)^3+5(\log_2\deltabound+2)^2+6(\log_2\deltabound+2)+1} \right).
	\end{align*}
\end{proof}


\appendix
\renewcommand*{\thesection}{\Alph{section}} 
\section{Technical Proofs}
\label{append}

\begin{proof}[Proof of Lemma~\ref{lemma:boundedentries}.] 
	We assume that $A$ is in the form \eqref{equ:HNF}.
	The first $n$ rows are part of the Hermite Normal Form. Hence, by definition, they fulfill the statement.
	Let $\alpha=[\alpha_1,\ldots,\alpha_n]$ be any other row of $A$.

	Let $B_0:=\deltabound$, $q := \lceil \log_2\deltabound\rceil$ and for $i\in\{1,\dots,q\}$, consider the increasing sequence $B_i:=\deltabound +  \sum_{l = 0}^{i-1}B_{l} \deltabound^{\log_2 \deltabound}(\log_2 \deltabound)^{(\log_2 \deltabound)/2}.$ We show that all entries of $\alpha$ are bounded by the constant $B_q$  as follows.
	
	Let $A_i$ denote the square submatrix of $A$ that consists of the first $n$ rows, except for the $i$-th row, which is replaced by $\alpha$.
	Then, since $|\det(A_n)|=\delta_1 \dots  \delta_k \cdot \alpha_n$, it follows that $|\alpha_n| \le \deltabound=B_0$.\\
	
	\noindent
	\textit{Case i)} ~ $i\ge n-k$:
		
		\noindent
		Consider $\alpha_i$ and assume that it holds that {$B_{n-j}\ge |\alpha_j|$} for all $n\ge j > i$. We can express $|\det(A_i)|$ as follows.
		\begin{align*}
		|\deltabound_1|\cdots|\deltabound_{r-1}||\det\underbrace{\left[ \begin{matrix}
			\alpha_i& &\cdots& &\alpha_n\\
			* & \deltabound_{r+1}& & \\
			* & * & \ddots & & \\
			\vdots & \vdots &* & \deltabound_k & \\
			A_{n,i} & & \dots & & A_{n,n}\\
			\end{matrix}\right]}_{=:\bar A}|.\\
		\end{align*}
		Let $\bar A^j$ be the matrix $\bar A$ without the first row and without column $j$. Then,
		\begin{align*}
		\deltabound \ge~& |\det \bar A| = \left|\alpha_i \det \bar A^1 + \sum_{l = 2}^{n-i+1}(-1)^{l+1}\alpha_{l+i-1} \det \bar A^l \right|\\
		\ge~& |\alpha_i\det\bar A^1| - \sum_{l = 2}^{n-i+1}|\alpha_{l+i-1} \det \bar A^l|,
		\end{align*}
		and thus
		\begin{align*}
		|\alpha_i| \le 	\frac{1}{|\det \bar A^1|}\left(\deltabound + \sum_{l = 2}^{n-i+1}|\alpha_{l+i-1} \det \bar A^l|\right).
		\end{align*}
		Furthermore, $1\le|\det \bar A^1 |\le \deltabound$.
		
		Since the absolute value of each entry in $\bar A^j$ is bounded by $\deltabound$, we can apply the Hadamard inequality \cite{hadamard1893Resdunquerelauxdet} to obtain
		$|\det \bar A^j|\le \deltabound^{n-i}(n-i)^{(n-i)/2}\le \deltabound^{\log_2 \deltabound}(\log_2 \deltabound)^{(\log_2 \deltabound)/2}$. This provides us with the bound 
		{\begin{align*}
			|\alpha_i| \le~& \deltabound +  \sum_{l = 2}^{n-i+1}B_{n-l-i+1} \deltabound^{\log_2 \deltabound}(\log_2 \deltabound)^{(\log_2 \deltabound)/2} =B_{n-i}\le B_k.
			\end{align*}
			Thus $|\alpha_i|\le B_{k}\le B_q$.}
		~\\
		
	\noindent
	\textit{Case ii)} ~ $i < n-k$:
		
		\noindent
		Similar to the previous case, we can express $|\det(A_i)|$ as
		\begin{align*}
		|\det\underbrace{\left[ \begin{matrix}
			\alpha_i& \alpha_{n-k}&\cdots& \cdots&\alpha_n\\
			* & \deltabound_{1}& & \\
			* & * & \ddots & & \\
			\vdots & \vdots & & \deltabound_k & \\
			A_{n,i} & A_{n,n-k}& \cdots & \cdots& A_{n,n}\\
			\end{matrix}\right]}_{=:\bar A}|.\\
		\end{align*}
		Let $\bar A^j$ be $\bar A$ without the first row and column $j$, so that
		\begin{align*}
		\deltabound \ge~& |\det \bar A| = \left|\alpha_i \det \bar A^1 + \sum_{l = 2}^{k+2}(-1)^{l+1}\alpha_{l+n-k-2} \det \bar A^l \right| \\
		\ge~& |\alpha_i\det\bar A^1| - \sum_{l = 2}^{k+2}|\alpha_{l+n-k-2} \det \bar A^l|,
		\end{align*}
		and
		\begin{align*}
		|\alpha_i| \le 	\frac{1}{|\det \bar A^1|}\left(\deltabound + \sum_{l = 2}^{k+2}|\alpha_{l+n-k-2} \det \bar A^l|\right).
		\end{align*}
		{We arrive at the bound 
			\begin{align*}
			|\alpha_i| \le \deltabound +  \sum_{l = 2}^{k+2}B_{k+2-l} \deltabound^{\log_2 \deltabound}(\log_2 \deltabound)^{(\log_2 \deltabound)/2}\le B_q.
			\end{align*}}
	This completes the proof by letting $C(\deltabound):= B_q$.
\end{proof}
\begin{proof}[Proof of Lemma~\ref{lemma:np1rows}.]
	Let $A$ be defined as illustrated in \eqref{equ:HNF}. Recall that $A$ has no singular $(n \times n)$-submatrices. For the purpose of deriving a contradiction, assume that $n>(2C(\deltabound)+1)^{\log_2\deltabound+3}+\log_2\deltabound$ and that $A$ has precisely $n+2$ rows.
	Let $\tilde A$ be the matrix $A$ without rows $i$ and $j$, where $i, j < n-k$, $i \neq j$.
	Observe that
	\begin{align*}
	|\det \widetilde A| = | \det \underbrace{
	\left[ \begin{matrix}
			& & \deltabound_1  & ~ & ~ & ~\\
			\vdots&\vdots   &  & \ddots & ~ & ~\\
			&  & & &  \deltabound_k & ~\\
			A_{n,i} &A_{n,j} &&\ldots&&  A_{n,n} \\
			A_{n+1,i} &A_{n+1,j} &&\ldots&&  A_{n+1,n} \\
			A_{n+2,i} &A_{n+2,j} &&\ldots&&  A_{n+2,n} \\
	\end{matrix}\right]}_{=:\widehat A^{ij}} |.
	\end{align*}
	
	$\widehat A^{ij}$ is a $(k+3) \times (k+3)$-matrix. Its determinant cannot be zero. This implies that the first two columns of $\widehat A^{ij}$ must be different for each choice of $i$ and $j$. 
	
	From Lemma~\ref{lemma:boundedentries}, it follows that the absolute value of any entry of $\widehat A^{ij}$  is bounded  by $C(\deltabound)$.  Therefore, the first two columns are in $\{-C(\deltabound),\dots,C(\deltabound)\}^{k+3}$. Since $k\le \log_2\deltabound$, there exist at most $(2C(\deltabound)+1)^{\log_2\deltabound+3}$ such vectors. Consequently, as $n> (2C(\deltabound)+1)^{\log_2\deltabound+3}+\log_2\deltabound$, there must exist two indices $i\neq j\in \{1,\dots,n-k-1  \}$ such that $\det(\widehat A^{ij})=0$. This contradicts that there are no singular $(n \times n)$-submatrices within $A$. The statement follows.
\end{proof}


\end{document}